\documentclass[11pt,a4paper]{article}
\pdfoutput=1 
\usepackage[margin=1in]{geometry}
\usepackage{amsmath,amsthm,amssymb,enumerate, bbm ,graphicx,color,caption,upgreek, float, tikz, subcaption,booktabs,longtable, appendix,graphics, pdfpages,rotating,mathtools, array, bm, blkarray,setspace,textcomp,tabularx,pgfplots,ytableau}
\definecolor{firebrick}{rgb}{0.7, 0.13, 0.13}

\usepackage[pdftex]{hyperref}

\ytableausetup{boxsize=1em}

\definecolor{blauw}{RGB}{61,158,255}
\definecolor{donkerblauw}{RGB}{0,0,255}
\definecolor{donkergroen}{RGB}{46,148,0}
\definecolor{donkerrood}{RGB}{204,0,0}

\pgfplotsset{compat=1.14}
\makeatletter 
\newcommand\mynobreakpar{\par\nobreak\@afterheading} 
\makeatother
\makeatletter
\let\@fnsymbol\@arabic
\makeatother

\newcommand{\R}{\mathbb{R}}
\newcommand{\Q}{\mathbb{Q}}

\usepackage[english]{babel}

\newtheorem{theorem}{Theorem}[section]
\newtheorem{lemma}[theorem]{Lemma}

\newtheorem{proposition}[theorem]{Proposition}
\newtheorem{corollary}[theorem]{Corollary}

\theoremstyle{definition}
\newtheorem{examp}[theorem]{Example}
\newtheorem{defn}{Definition}[section]

\newtheorem*{examp*}{Example}
\newtheorem{problem}{Problem}

\newtheorem{remark}{Remark}[section]

\definecolor{donkerblauw}{RGB}{0,0,255}
\definecolor{donkergroen}{RGB}{46,148,0}
\definecolor{donkerrood}{RGB}{204,0,0}
\definecolor{donkergroen2}{RGB}{0,95,0}

\theoremstyle{plain}
\newfloat{Algorithm}{!hbt}{alg}
\newcommand{\T}{^{\sf T}}

\newcounter{thm}[section]

\let\OLDthebibliography\thebibliography
\renewcommand\thebibliography[1]{
  \OLDthebibliography{#1}
  \setlength{\parskip}{0pt}
  \setlength{\itemsep}{0pt plus 0.3ex}
}

\title{\Large Sum-of-squares certificates for symmetric polynomials on the hypercube: {\large A counterexample to a conjecture of De Klerk and Laurent}}

\author{Sven Polak\thanks{Tilburg University, The Netherlands. E-mail: \href{mailto:s.c.polak@tilburguniversity.edu}{\texttt{s.c.polak@tilburguniversity.edu}}}}
\date{June 1, 2026}
\begin{document}

\maketitle

\begin{abstract}This paper studies sum-of-squares (SoS) representations of nonnegative polynomials over the hypercube~$[0,1]^n$. De Klerk and Laurent (SIAM J. Optim., 2010) conjectured that the smallest constant~$C_n$ such that the polynomial~$x_1\cdots x_n +C_n$ is contained in the degree-$n$ truncated quadratic module~$M_{n,n}(x_1-x_1^2,\ldots,x_n-x_n^2)$ of the hypercube is~$C_n=1/(n(n+2))$, for~$n$ even. We specialize symmetry reduction techniques for finding sum-of-squares certificates to the hypercube, where the generators~$x_i-x_i^2$ are not individually invariant under the symmetric group but form an invariant set, and apply them to this conjecture.
Combining this reduction with a further (heuristic) sparsity reduction, a rational rounding step, and an exact verification over~$\mathbb{Q}$, we prove the bound $C_8\leq 11/1000 <1/80$. In particular, this disproves the conjectured optimal value for $n=8$.
\end{abstract}

\section{Introduction}

Consider the hypercube $H_n=[0,1]^n$ described by the inequalities $g_i=x_i-x_i^2 \geq 0$ for $i=1,\ldots,n$. For even $r=2d$, let $M_{n,r}(\mathbf g)$ be the degree-$r$ truncated quadratic module generated by $g_1,\ldots,g_n$, that is, the set of all polynomials of the form
$$ 
p=\sigma_0+\sum_{i=1}^n \sigma_i (x_i-x_i^2),
$$
where $\sigma_0,\sigma_1,\ldots,\sigma_n$ are sums of squares of polynomials, $\deg\sigma_0\le r$, and $\deg\sigma_i\le r-2$.

De Klerk and Laurent~\cite{KlerkLaurent} conjectured that for even $n$, the smallest constant $C_n$ such that the polynomial
$$
x_1 x_2 \cdots x_n + C_n
$$
has a degree-$n$ sum-of-squares representation in $M_{n,n}(\mathbf{g})$ is precisely $C_n = 1/(n(n+2))$. They showed that such a constant $C_n \leq 1$ exists, and conjectured that the smallest one is $C_n = 1/(n(n+2))$, suggested by their numerical verification for the cases~$n=2,4,6$ and a symbolic expression for the case~$n=2$. The main result of this paper is that the conjectured constant is not optimal already for $n=8$.

\begin{theorem} \label{thm:conjecturecounterexample}
For $n=8$, the minimum constant $C_{8}$ such that $x_1x_2\cdots x_8 + C_{8} \in M_{8,8}(\mathbf g)$ satisfies 
$$
C_{8} \le \tfrac{11}{1000} < \tfrac{1}{8\cdot 10}.
$$
In particular, this disproves the conjectured optimal value of De Klerk and Laurent.
\end{theorem}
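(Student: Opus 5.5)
The plan is to exhibit an explicit certificate with rational data and check it exactly. We aim to write
$$x_1\cdots x_8+\tfrac{11}{1000}=\sigma_0+\sum_{i=1}^8\sigma_i\,(x_i-x_i^2),$$
with $\deg\sigma_0\le 8$ and $\deg\sigma_i\le 6$. Each SoS term is encoded by a Gram matrix: $\sigma_0=v_4(x)^{\sf T}Q_0v_4(x)$ and $\sigma_i=v_3(x)^{\sf T}Q_iv_3(x)$, where $v_d$ is the vector of monomials of degree at most $d$ and $Q_0,Q_i\succeq0$. Because the target is $S_8$-invariant and $S_8$ permutes the generators $g_i=x_i-x_i^2$, we may average any certificate over the group. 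We may therefore assume $\sigma_0$ is $S_8$-invariant and $\sigma_{\pi(i)}(x)=\sigma_i(\pi^{-1}x)$. In particular, everything is determined by $\sigma_0$ and $\sigma_1$, and $\sigma_1$ is invariant under the stabilizer $S_7$ of the index $1$.

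We then block-diagonalize the two Gram matrices with the representation theory of $S_8$ (acting on $\R[x]_{\le4}$) and of $S_7$ (acting on $\R[x]_{\le3}$). The isotypic decomposition is indexed by partitions of the form $(8-k,\lambda)$, resp.\ $(7-k,\lambda)$, and turns the large SDP into a few small blocks. Further reductions are available:
\begin{itemize}
\item one can work modulo $x_i^2=x_i$ on the Boolean part where convenient;
\item monomials containing squares can be grouped with the square-free ones.
\end{itemize}
After this we solve the reduced SDP numerically, minimizing $C$ or fixing $C=0.011$ and maximizing the smallest eigenvalue.

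Next, heuristically, we look for a sparse solution. We drop the isotypic blocks or monomial orbits whose numerical contribution is negligible, and re-solve to get a strictly feasible point, i.e.\ an interior solution with positive definite blocks, in the reduced face. Having slack is essential for rounding.

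The exact step follows a Peyrl--Parrilo type rounding.
\begin{enumerate}
\item Round the block matrices to rationals with small denominators.
\item Project affinely onto the linear equations expressing coefficient matching in the invariant basis, which can be done exactly over $\Q$. The resulting error correction is tiny relative to the minimal eigenvalue.
\item Verify positive semidefiniteness of each rational block exactly, for instance by an $LDL^{\sf T}$ factorization with nonnegative rational pivots.
\end{enumerate}
Finally, we expand the symmetrized certificate back into polynomials and confirm the identity coefficient-wise over $\Q$.

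The main obstacle I expect is twofold. First, the symmetry reduction must be set up correctly when the generators are only invariant as a set; the $S_7$-stabilizer and induced-representation bookkeeping is where errors occur. Second, $11/1000$ may be close to the true optimum, so the numerical solution might sit near the boundary and leave too little slack for rounding. If that happens, I would first try the facial/sparsity reduction, which removes the blocks forced singular, and only then increase numerical precision. Since $11/1000<1/80=0.0125$, the exact certificate disproves the conjecture.
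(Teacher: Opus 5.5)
Your plan follows the paper's proof. The paper likewise reduces to one $S_8$-invariant $\sigma_0$ plus translates of one stabilizer-invariant multiplier (Proposition~\ref{prop:invprop}, with index $n$ where you use $1$). It block-diagonalizes via $S_8$ and $S_7$, solves numerically, and sparsifies heuristically, keeping only $Q_{(8)},Q_{(6,2)},Q_{(5,3)},R_{(7)}$ and zeroing further diagonal entries. It then fixes $C=11/1000$, maximizes the smallest eigenvalue of the active blocks, rounds, corrects exactly via $z=z_{\mathrm{round}}+A^{\mathsf T}(AA^{\mathsf T})^{-1}r$, and verifies positive semidefiniteness and coefficient matching over $\Q$. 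The paper checks all principal minors rather than using an $LDL^{\mathsf T}$ factorization, which makes no difference.

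One part of your proposal is wrong and must be dropped: the two ``further reductions'' (working modulo $x_i^2=x_i$, and grouping non-squarefree monomials with square-free ones). They are not valid for the quadratic module. $M_{8,8}(\mathbf g)$ contains $\sigma_i(x_i-x_i^2)$ only with $\sigma_i$ a sum of squares, never $-(x_i-x_i^2)q$; indeed $x_i^2-x_i<0$ in the interior of the cube, so it is not in $M$. Consequently, an identity modulo $\langle x_1^2-x_1,\ldots,x_8^2-x_8\rangle$ certifies membership only in the truncated preordering. By the lemma in Section~\ref{sec:ideal}, such identities compute $C^{\mathrm{bool}}_{8,8}\le C_8$, and whether the two thresholds coincide is exactly the open Problem posed there.

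The actual certificate genuinely uses non-squarefree basis polynomials, such as $\sum_i x_i^4$ and $\sum_{i<j}x_i^2x_j^2$ in $Q_{(8)}$, and $\sum_i x_i^3$, $x_8^2$, $x_8^3$ in $R_{(7)}$. Your final exact identity check in $\R[x]$ would reject a Boolean-reduced solution, so following those bullets would not give a false proof, just a failed one. Keep the full monomial bases of $\R[x]_{\le 4}$ and $\R[x]_{\le 3}$, including the non-squarefree exponent patterns in the representative sets, throughout the SDP.
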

We emphasize that Theorem~\ref{thm:conjecturecounterexample} disproves the claimed optimality of the constant $1/(n(n+2))$, not the weaker membership statement $x_1\cdots x_n+\frac1{n(n+2)}\in M_{n,n}(\mathbf g)$ which is the statement relevant for the applications to Putinar-type error bounds. If $x_1\cdots x_n+C\in M_{n,n}(\mathbf g)$, then $x_1\cdots x_n+C'\in M_{n,n}(\mathbf g)$ for every $C'\ge C$, since nonnegative constants are sums of squares. Thus, for $n=8$, our certificate with $C=11/1000<1/80$ immediately implies membership for the conjectured constant $1/80$. 

\paragraph{Motivation.}
The conjecture was proposed as a possible route to explicit Putinar-type error bounds on the hypercube. De Klerk and Laurent showed that constants $C_r$ satisfying
\begin{align}\label{eq:membership}
x_1\cdots x_r+C_r\in M_{r,r}(\mathbf g)
\end{align}
can be used to convert certain Bernstein/preordering certificates into certificates in the quadratic module. In particular, for Bernstein order~$k$ and even $r\ge nk$, their argument gives
$$
B_k(p-p_{\min,H_n})+(p_{\max,H_n}-p_{\min,H_n})C_r2^{nk}\in M_{n,r}(\mathbf g),
$$
where $B_k$ denotes the degree-$k$ Bernstein approximation on~$[0,1]^n$. For the applications to Putinar-type error bounds, the essential point is the upper-bound/membership statement~\eqref{eq:membership} with $C_r=1/(r(r+2))$, which would provide a monomial correction constant of order $O(r^{-2})$. The stronger claim that $1/(r(r+2))$ is the minimal such constant is the part disproved in this paper.

This connection was further developed by Magron~\cite{Magron}, who used the relation between the preordering and the quadratic module on the hypercube to derive explicit Putinar-type error bounds, building on the Bernstein approach of De Klerk and Laurent. Recent work on quantitative Putinar/Lasserre bounds on the hypercube includes papers by Baldi and Slot~\cite{BaldiSlot} and by Gribling, de Klerk and Vera~\cite{GriblingKlerkVera}. These papers study general degree bounds and convergence rates for polynomial optimization on the hypercube, while the question studied here is the value of the specific De Klerk--Laurent correction constant.

\paragraph{Contribution.}
We prove Theorem~\ref{thm:conjecturecounterexample} by constructing an exact rational sum-of-squares certificate. The symmetry reduction used here is an application of the standard representation-theoretic reduction for invariant sums of squares, as in Gatermann and Parrilo~\cite{gatpar}. For symmetric polynomial optimization and symmetric sums of squares, closely related $S_n$-reductions are described in~\cite{riener,blekherman}. What is specific to the present paper is the adaptation of these ideas to the quadratic module of the hypercube, where the generators $g_i=x_i-x_i^2$ are not individually invariant but form an $S_n$-invariant set.

The construction has three steps. First, we show in Proposition~\ref{prop:invprop} that a symmetric certificate may be chosen as one $S_n$-invariant sum of squares plus translates of one $S_{n-1}$-invariant multiplier. Second, we block diagonalize the corresponding Gram matrices using representation theory of the symmetric group. Third, we compute numerical SDP values for $C_n$ up to $n=12$, and use a numerical sparsification step only to identify a small support for the $n=8$ certificate. The final certificate is then rationalized and verified exactly over $\mathbb Q$.

 \paragraph{Structure.}  The paper is organized as follows. In Section~\ref{sec:symmetry} we show that symmetric certificates can be written as one $S_n$-invariant sum of squares plus a sum involving translates of one $S_{n-1}$-invariant sum of squares. In Section~\ref{sec:blockdiag} we recall the block diagonalization used to obtain the reduced SDP. Section~\ref{sec:numresults} describes the numerical computations and the sparse support used for the $n=8$ certificate. Section~\ref{sec:rational} explains the exact rational verification. Finally, Section~\ref{sec:ideal} discusses a related Boolean quotient/preordering relaxation suggested by the computations.

\section{Symmetry reduction on the hypercube}\label{sec:symmetry}

Suppose that~$p\in \mathbb{R}[x_1,\ldots,x_n]$ is a polynomial symmetric under the action of~$S_n$ on the indices of the variables: $\pi\cdot x_i=x_{\pi(i)}$, and that we aim to obtain a degree~$2d$ sum-of-squares certificate on the hypercube:
\begin{align}\label{toprove}
p = \sigma_0 + \sum_{i=1}^n \sigma_i (x_i-x_i^2), \quad \text{with $\deg \sigma_0 \leq 2d$, and~$\deg \sigma_i \leq 2d-2$ for~$i=1,\ldots,n$.}
\end{align}
The polynomials~$x_i-x_i^2$ are themselves not invariant under~$S_n$, but the set~$\mathbf{g}:= \{x_1-x_1^2,\ldots,x_n-x_n^2\}$ is invariant. For~$j \in [n]$, let~$\mathrm{Stab}_{S_n}(j)$ be the stabilizer subgroup of~$j$ in~$S_n$, so $\mathrm{Stab}_{S_n}(j)= \{\pi \in S_n : \pi(j) = j \}$. Note that this group is isomorphic to~$S_{n-1}$.

The following proposition specializes standard symmetry-reduction arguments (cf., e.g., Gatermann \& Parrilo~\cite{gatpar}) to the hypercube.

\begin{proposition}\label{prop:invprop}
Let $p \in \mathbb{R}[x_1,\ldots,x_n]$ be invariant under the symmetric group $S_n$. Then $p$ is contained in the degree~$2d$ truncated quadratic module~$M_{n,2d}(\mathbf{g})$ over the hypercube if and only if there exist sum-of-squares polynomials $ \widetilde{\sigma}$, $ \widetilde{\rho}$ with $\deg  \widetilde{\sigma} \leq 2d$, and~$\deg \widetilde{\rho}\leq 2d-2$, such that
\begin{align}\label{eq:decomphypercube}
p = \widetilde{\sigma} + \sum_{i=1}^n ((i\,n) \cdot \widetilde{\rho})  (x_i - x_i^2),
\end{align}
where $ \widetilde{\sigma}$ is $S_n$-invariant and $ \widetilde{\rho}$ is invariant under~$\mathrm{Stab}_{S_n}(n)$.
\end{proposition}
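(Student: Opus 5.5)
The ``if'' direction is immediate. Setting $\sigma_0=\widetilde\sigma$ and $\sigma_i=(i\,n)\cdot\widetilde\rho$ gives a certificate of the form~\eqref{toprove}. Each $\sigma_i$ is a sum of squares of degree at most $2d-2$, because permuting variables maps squares to squares and preserves degree. The substance is therefore the ``only if'' direction, which I will prove by averaging a given certificate over $S_n$ and then reorganising the terms according to the index of the generator.

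Suppose $p=\sigma_0+\sum_{i=1}^n\sigma_i g_i$ with $g_i=x_i-x_i^2$. For $\pi\in S_n$ we have $\pi\cdot g_i=g_{\pi(i)}$, and $\pi\cdot p=p$. Applying $\pi$ and averaging over the group gives
\[
p=\frac{1}{n!}\sum_{\pi\in S_n}\pi\cdot\sigma_0+\frac{1}{n!}\sum_{\pi\in S_n}\sum_{i=1}^n(\pi\cdot\sigma_i)\,g_{\pi(i)}.
\]
The first term, $\widetilde\sigma:=\frac1{n!}\sum_\pi\pi\cdot\sigma_0$, is an $S_n$-invariant sum of squares of degree at most $2d$.

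For the second term, I reindex by $j=\pi(i)$ and collect the coefficient $\tau_j:=\frac1{n!}\sum_{i}\sum_{\pi:\pi(i)=j}\pi\cdot\sigma_i$ of $g_j$. Each $\tau_j$ is a sum of squares of degree at most $2d-2$. The key equivariance identity to check is $\pi\cdot\tau_j=\tau_{\pi(j)}$. It holds because left multiplication by $\pi$ bijects $\{\pi':\pi'(i)=j\}$ onto $\{\pi'':\pi''(i)=\pi(j)\}$.

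From this identity I define $\widetilde\rho:=\tau_n$. For $\pi\in\mathrm{Stab}_{S_n}(n)$ we get $\pi\cdot\widetilde\rho=\tau_n$, so $\widetilde\rho$ is $\mathrm{Stab}_{S_n}(n)$-invariant. Applying the identity with the transposition $(i\,n)$, which maps $n$ to $i$, gives $(i\,n)\cdot\widetilde\rho=\tau_i$; when $i=n$ this is the identity permutation. Substituting back yields exactly~\eqref{eq:decomphypercube}.

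The only delicate step is this bookkeeping of the reindexed double sum and verifying the equivariance identity. Degrees and the sum-of-squares property are preserved trivially, since averaging is a convex combination of permuted sums of squares.
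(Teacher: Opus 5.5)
Your proof is correct and follows the same route as the paper: you average the certificate over $S_n$ and regroup the terms by generator index, and your $\tau_j$ coincides with the paper's $\rho_j=\frac1n\sum_i (i\,j)\cdot\widetilde\sigma_i$, since $\{\pi:\pi(i)=j\}=(i\,j)\,\mathrm{Stab}_{S_n}(i)$. The difference is only in the bookkeeping: your single identity $\pi\cdot\tau_j=\tau_{\pi(j)}$, proved by the left-multiplication bijection, gives both the stabilizer invariance and $(i\,n)\cdot\widetilde\rho=\tau_i$ at once, whereas the paper checks these term by term using $(i\,j)(j\,n)(i\,n)\in\mathrm{Stab}_{S_n}(i)$.
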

\begin{proof}
One direction is clear. If $p$ admits a decomposition as in~\eqref{eq:decomphypercube} where $\widetilde{\sigma}$ and $\widetilde{\rho}$ are sums of squares with the stated degree bounds, then each $(i\,n)\cdot\widetilde{\rho}$ is again a sum of squares of degree at most $2d-2$. Hence $p\in M_{n,2d}(\mathbf g)$.

Conversely, suppose $p$ admits a sum-of-squares certificate of the form $p = \sigma_0 + \sum_{i=1}^n \sigma_i (x_i - x_i^2)$, where each $\sigma_i$ is a sum-of-squares polynomial with $\deg \sigma_0 \leq 2d$ and $\deg \sigma_i \leq 2d - 2$. Since $p$ is $S_n$-invariant, averaging over~$S_n$ gives
\begin{align*}
p &= \frac{1}{n!} \sum_{\pi \in S_n} \left( \pi \cdot \sigma_0 + \sum_{i=1}^n (\pi \cdot \sigma_i)(x_{\pi(i)} - x_{\pi(i)}^2) \right)
\\&= \frac{1}{n!} \sum_{\pi \in S_n} \left( \pi \cdot \sigma_0 \right)+ \sum_{i=1}^n\frac{1}{n!} \sum_{\pi \in S_n} \left( (\pi \cdot \sigma_i)(x_{\pi(i)} - x_{\pi(i)}^2) \right).
\end{align*}
Define $\widetilde{\sigma} := \frac{1}{n!} \sum_{\pi \in S_n} \pi \cdot \sigma_0$, which is $S_n$-invariant and a sum-of-squares polynomial with degree at most $2d$. Furthermore, for~$i=1,\ldots,n$ we define $\widetilde{\sigma}_i := \frac{1}{(n-1)!} \sum_{\pi \in \mathrm{Stab}_{S_n}(i)} \pi \cdot \sigma_i$.
\begin{align*}
p &= \widetilde{\sigma}+  \sum_{i=1}^n\frac{1}{n!} \sum_{\pi \in S_n} \left( (\pi \cdot \sigma_i)(x_{\pi(i)} - x_{\pi(i)}^2) \right)
\\&= \widetilde{\sigma}+  \sum_{i=1}^n \frac{1}{n} \sum_{j=1}^n(i \; j) \cdot \left(\frac{1}{(n-1)!}\sum_{\tau \in \mathrm{Stab}_{S_n}(i)} (\tau \cdot \sigma_i) (x_i-x_i^2)\right)
\\&= \widetilde{\sigma}+  \sum_{i=1}^n \frac{1}{n} \sum_{j=1}^n(i \; j) \cdot\left(  \widetilde{\sigma}_i (x_i-x_i^2)\right)
= \widetilde{\sigma}+  \sum_{i=1}^n \frac{1}{n} \sum_{j=1}^n \left((i j)  \widetilde{\sigma}_i\right) (x_j-x_j^2)
\\&= \widetilde{\sigma}+  \sum_{j=1}^n \left(\sum_{i=1}^n \frac{1}{n} (i j)  \widetilde{\sigma}_i\right) (x_j-x_j^2).
\end{align*}
Now we define for~$j=1,\ldots,n$, the polynomial $\rho_j:=\left(\sum_{i=1}^n \frac{1}{n} (i \; j)\cdot  \widetilde{\sigma}_i\right) $. Each $\rho_j$ is a sum of squares of degree $\leq 2d-2$, since sums of squares are preserved under permutations of the variables and under nonnegative linear combinations. Then each~$\rho_j$ is invariant under~$\mathrm{Stab}_{S_n}(j)$, as  the separate terms $(i \; j)\cdot  \widetilde{\sigma}_i$ are invariant under this group. Moreover, we have~$(j \;n) \rho_n = \rho_j$ for every~$j=1,\ldots,n$. To see this, write $(j\,n)\cdot\rho_n = \sum_{i=1}^n\frac1n (j\,n)(i\,n)\cdot\widetilde{\sigma}_i$. We compare this term by term with $\rho_j=\sum_{i=1}^n\frac1n (i\,j)\cdot\widetilde{\sigma}_i$. For fixed $i$, the permutation $(i\,j)(j\,n)(i\,n)$ fixes $i$, and hence belongs to $\mathrm{Stab}_{S_n}(i)$. Therefore, using that $\widetilde{\sigma}_i$ is $\mathrm{Stab}_{S_n}(i)$-invariant, we have
$$
(j\,n)(i\,n)\cdot\widetilde{\sigma}_i =(i\,j)\cdot\left((i\,j)(j\,n)(i\,n)\cdot\widetilde{\sigma}_i\right) = (i\,j)\cdot\widetilde{\sigma}_i.
$$
So indeed $(j \;n) \rho_n = \rho_j$ for every~$j=1,\ldots,n$. Hence, by setting~$\widetilde{\rho}:=\rho_n$, we conclude that
\begin{align*}
p &= \widetilde{\sigma} + \sum_{j=1}^n \rho_j  (x_j - x_j^2)= \widetilde{\sigma} + \sum_{j=1}^n ((j\;n) \cdot {\widetilde{\rho}})  (x_j - x_j^2),
\end{align*}
which gives the desired expression.
\end{proof}

Let $[x]_d$ denote the column vector of all monomials in $x_1,\ldots,x_n$ of total degree at most~$d$. For any matrix~$M$ indexed by monomials~$u$,$v$, we write $(\pi M)_{u,v} = (M)_{\pi u, \pi v}$ for all $\pi \in S_n$. 

\begin{corollary}
Let $p \in \mathbb{R}[x_1,\ldots,x_n]$ be invariant under the symmetric group $S_n$. Then $p$ is contained in the degree~$2d$ truncated quadratic module~$M_{n,2d}(\mathbf{g})$ over the hypercube if and only if there exists an~$S_n$-invariant matrix~$Q \in \R^{[x]_d \times [x]_d}$ and a~$\mathrm{Stab}_{S_n}(n)$-invariant matrix~$R \in \R^{[x]_{d-1} \times [x]_{d-1}}$ such that
\begin{align*}
&Q \succeq 0, \quad R\succeq 0, \quad\quad\text{and}\\
&p = [x]_d\T Q [x]_d + \sum_{i=1}^n \left((i \; n) \cdot [x]_{d-1}\T  R [x]_{d-1} \right) (x_i-x_i^2).
\end{align*}
\end{corollary}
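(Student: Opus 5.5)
The plan is to derive the corollary directly from Proposition~\ref{prop:invprop} using the Gram-matrix characterization of sums of squares. I will also use the compatibility of the $S_n$-action on polynomials with the action $(\pi M)_{u,v}=M_{\pi u,\pi v}$ on matrices. The ``if'' direction is immediate. If $Q\succeq 0$, then $[x]_d\T Q[x]_d$ is a sum of squares of degree at most $2d$: factor $Q=\sum_k q_kq_k\T$ to write it as $\sum_k (q_k\T[x]_d)^2$. Likewise $[x]_{d-1}\T R[x]_{d-1}$ is a sum of squares of degree at most $2d-2$, and so are its images under $(i\,n)$. Hence the displayed identity exhibits $p\in M_{n,2d}(\mathbf g)$. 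The invariance hypotheses on $Q$ and $R$ are not even needed for this direction.

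For the ``only if'' direction, I first apply Proposition~\ref{prop:invprop}. This gives an $S_n$-invariant sum of squares $\widetilde\sigma$ of degree $\le 2d$ and a $\mathrm{Stab}_{S_n}(n)$-invariant sum of squares $\widetilde\rho$ of degree $\le 2d-2$ satisfying \eqref{eq:decomphypercube}. Next I pick Gram matrices $Q_0\succeq 0$ and $R_0\succeq 0$ with $\widetilde\sigma=[x]_d\T Q_0[x]_d$ and $\widetilde\rho=[x]_{d-1}\T R_0[x]_{d-1}$. These exist because a sum of squares of degree $\le 2e$ is a sum of squares of polynomials of degree $\le e$: the top-degree forms of the squares cannot cancel. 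Such Gram matrices need not be invariant, so I symmetrize them, setting
\[
Q:=\frac1{n!}\sum_{\pi\in S_n}\pi Q_0,\qquad R:=\frac1{(n-1)!}\sum_{\pi\in \mathrm{Stab}_{S_n}(n)}\pi R_0 .
\]
Each $\pi M$ is obtained from $M$ by a simultaneous permutation of rows and columns, because $\pi$ permutes the monomials in $[x]_d$. So $\pi M\succeq 0$ whenever $M\succeq 0$, and hence $Q,R\succeq 0$. By construction $Q$ is $S_n$-invariant and $R$ is $\mathrm{Stab}_{S_n}(n)$-invariant.

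The one step needing care is checking that symmetrization does not change the represented polynomials. This amounts to the identity $[x]_d\T(\pi M)[x]_d=\pi^{-1}\cdot([x]_d\T M[x]_d)$, or the same with $\pi$ in place of $\pi^{-1}$, depending on convention. Either way, averaging over a group gives the same result whichever convention is used. Since $\widetilde\sigma$ is $S_n$-invariant, we get $[x]_d\T Q[x]_d=\frac1{n!}\sum_\pi \pi^{-1}\cdot\widetilde\sigma=\widetilde\sigma$. Similarly $[x]_{d-1}\T R[x]_{d-1}=\widetilde\rho$ by its $\mathrm{Stab}_{S_n}(n)$-invariance. Substituting these into \eqref{eq:decomphypercube} yields the displayed identity, which completes the proof. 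I expect the verification of this reindexing identity, $\sum_{u,v}M_{\pi u,\pi v}\,uv=\sum_{u,v}M_{u,v}\,(\pi^{-1}u)(\pi^{-1}v)$, to be the only point requiring attention. It is a routine substitution using that $\pi$ is a bijection on monomials of each degree.
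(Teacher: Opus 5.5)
Your proposal is correct and takes the same route as the paper: apply Proposition~\ref{prop:invprop}, write $\widetilde\sigma$ and $\widetilde\rho$ with positive semidefinite Gram matrices, and replace these by their group averages, which remain positive semidefinite and represent the same invariant polynomials. The paper's proof is a two-line sketch of exactly this; you additionally supply the implicit details (Gram bases of degree $\le d$ and $\le d-1$ suffice, and the reindexing identity for $\pi M$).
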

\begin{proof}
This follows directly from Proposition~\ref{prop:invprop}, by rewriting the sum-of-squares polynomials~$\widetilde{\sigma}= [x]_d\T Q [x]_d $ and~$\widetilde{\rho}=  [x]_{d-1}\T  R [x]_{d-1}$, with $Q \succeq 0$ and $R \succeq 0$. Since $\widetilde{\sigma}$ and $\widetilde{\rho}$ are invariant under $S_n$ and $\mathrm{Stab}_{S_n}(n)$, respectively, we may assume that $Q$ and $R$ are invariant under the corresponding groups by replacing them with their group averages.
\end{proof}

\begin{examp}[The conjecture for~$n=4$]
De Klerk and Laurent~\cite{KlerkLaurent} gave an explicit expression for~$n=2$: 
$$
8\, x_1x_2+1 = (1-2x_1-2x_2)^2 + 4 (x_1-x_1^2)+ 4 (x_2-x_2^2).
$$
They furthermore verified the conjecture numerically for~$n=2,4,6$. 
 
We used the above symmetry-reduction method to obtain an explicit symbolic expression for~$24 x_1x_2x_3x_4+1$ as an element of the truncated quadratic module~$M_{4,4}(\mathbf{g})$ over the hypercube. The identity below was obtained by solving the symmetry-reduced SDP for $n=4$ and extracting a human-readable certificate from one feasible Gram matrix. See Section~\ref{subsec:n4-cert} for details. 
The equality can be verified by direct expansion:
$$
 24\, x_1x_2x_3x_4 +1= \sigma_0 + \sum_{i=1}^4 \sigma_i \cdot  (x_i-x_i^2),
$$
where
\begin{align*}
 \sigma_0&= \left(1+ 2\hspace{-5pt} \sum_{1\leq i<j\leq 4}\hspace{-5pt}  x_ix_j -\sum_{i=1}^4 x_i^2 -\sum_{i=1}^4x_i \right)^2 + \left( \sum_{i=1}^4 x_i^2 -\sum_{i=1}^4x_i \right)^2\\ & \phantom{=} \,\,+2 \hspace{-5pt} \sum_{1 \leq i < j \leq 4} \left(x_i^2 -x_j^2 -x_i +x_j\right)^2,
 \\ \sigma_i &= 2 \left(1-x_i-\tfrac{1}{2} \sum_{\substack{j \in [4]:\\ j \neq i}}x_j \right)^2 +6\left(x_i-\tfrac{1}{2} \sum_{\substack{j \in [4]:\\ j\neq i}} x_j\right)^2  \text{for $i \in [4]$.}
\end{align*}
\end{examp}

We next recall the block diagonalization used for the matrices $Q$ and $R$.

\section{Block-diagonalization via representation theory}\label{sec:blockdiag}

We recall the representation-theoretic framework underlying the symmetry reduction used in this paper. This goes back to Gatermann--Parrilo~\cite{gatpar}; see also, e.g.,~\cite{invsemi,vallentin,gijswijt,LPS}. For the symmetric-group reductions of $S_n$-invariant polynomials, see~\cite{blekherman,sagan,riener}.

Let $G$ be a finite group acting on a finite-dimensional complex vector space $V$. We call $V$ a \emph{$G$-module}. A subspace $W\subseteq V$ is a \emph{$G$-submodule} if it is $G$-invariant. A linear map $\psi:V\to W$ between $G$-modules is a \emph{$G$-homomorphism} if $\psi(g\cdot v)=g\cdot \psi(v)$ for all $g\in G$ and $v\in V$.

Since $G$ is finite, $V$ is completely reducible and decomposes as
$$
V=\bigoplus_{i=1}^k \bigoplus_{j=1}^{m_i} V_{i,j},
$$
where each $V_{i,j}$ is irreducible, and for fixed $i$ the submodules $V_{i,1},\ldots,V_{i,m_i}$ are mutually $G$-isomorphic (different $i$ correspond to non-isomorphic irreducible types).

\begin{defn}[Representative set]\label{def:reprset}
For each $i\in[k]$ and $j\in[m_i]$ choose a nonzero vector $u_{i,j}\in V_{i,j}$ such that for every $j,j'\in[m_i]$ there exists a $G$-isomorphism $\varphi_{j\to j'}:V_{i,j}\to V_{i,j'}$ satisfying $\varphi_{j\to j'}(u_{i,j})=u_{i,j'}$. Define $U_i:=(u_{i,1},\ldots,u_{i,m_i})$. A collection $\{U_1,\ldots,U_k\}$ obtained in this way is called a \emph{representative set} for the action of $G$ on $V$.
\end{defn}
In the applications below the $G$-modules are polynomial spaces with real monomial bases with the standard Euclidean inner product, and $G$ is either $S_n$ or $S_{n-1}$. The representative vectors constructed below are real. Thus the reduction can be written entirely over $\mathbb R$, and we do so from now on. For a polynomial $f\in\mathbb R[x_1,\ldots,x_n]$, write
$$
\mathcal S_G(f):=\sum_{g\in G}g\cdot f.
$$
We will use the  block reduction for sum-of-squares polynomials in the following form, due to Blekherman and Riener~\cite[Remark 4.3 and Corollary 4.4]{blekherman}; see also~\cite{gatpar,riener}.
\footnote{Blekherman and Riener use the normalized Reynolds operator $R_G=|G|^{-1}\mathcal S_G$; this only rescales the positive semidefinite
matrices $Q_\lambda$. We do not normalize, so that our SDP constraint matrices consist of integers.}
\begin{theorem}[Block form of invariant sums of squares]
\label{thm:invariant-sos-block}
Let $G$ be either $S_n$, acting on $V=\mathbb R[x_1,\ldots,x_n]_{\le d}$ by permuting all variables, or $S_{n-1}$, acting on $V= \mathbb R[x_1,\ldots,x_n]_{\le d}$ by permuting $x_1,\ldots,x_{n-1}$ and fixing $x_n$. Let $U_\lambda=(u_{\lambda,1},\ldots,u_{\lambda,m_\lambda})$ be a representative set for the $G$-module $V$. Define
$$
X_\lambda(x):=\left(\mathcal S_G(u_{\lambda,a}u_{\lambda,b})\right)_{a,b=1}^{m_\lambda}.
$$
Then a $G$-invariant polynomial $\sigma$ is a sum of squares of polynomials
from $V$ if and only if there exist positive semidefinite matrices
$Q_\lambda\succeq 0$ such that
$$
    \sigma(x)=\sum_\lambda \langle Q_\lambda,X_\lambda(x)\rangle .
$$
\end{theorem}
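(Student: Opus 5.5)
The argument follows the Gatermann--Parrilo reduction in three steps: average a Gram matrix over $G$, block-diagonalize the invariant Gram matrix with Schur's lemma, and read off the blocks as the matrices $Q_\lambda$.

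\emph{Sufficiency.} Assume $\sigma=\sum_\lambda\langle Q_\lambda,X_\lambda\rangle$ with every $Q_\lambda\succeq 0$. Factor $Q_\lambda=\sum_k c_k c_k\T$. Then
$$
\langle Q_\lambda,X_\lambda\rangle=\sum_k\sum_{g\in G} g\cdot\Big(\sum_a c_{k,a}u_{\lambda,a}\Big)^2 .
$$
Each $g\cdot(\sum_a c_{k,a}u_{\lambda,a})$ lies in $V$, so $\sigma$ is a sum of squares of polynomials from $V$.

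\emph{Necessity, averaging.} Assume $\sigma=\sum_k f_k^2$ with $f_k\in V$. Because $\sigma$ is invariant,
$$
\sigma=\tfrac1{|G|}\sum_{g\in G}\sum_k (g\cdot f_k)^2 .
$$
Take the monomial basis of $V$ and write $\sigma=[x]_d\T Q[x]_d$, where $Q=\frac1{|G|}\sum_g\sum_k (g f_k)(g f_k)\T$. This $Q$ is positive semidefinite and $G$-invariant. Since the action permutes monomials and is orthogonal, $Q$ defines a $G$-equivariant positive semidefinite operator on $V$ with the Euclidean inner product.

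\emph{Necessity, block decomposition.} Let $V=\bigoplus_{\lambda,j}V_{\lambda,j}$, chosen orthogonal, which is possible because the inner product is invariant. Schur's lemma restricts $Q$ on $\mathrm{Hom}_G(V_{\lambda,j},V_{\lambda,j'})$ to a multiple of $\varphi_{j\to j'}$. Here I will use the real setting, in which the irreducible representations of $S_n$ are absolutely irreducible, so each such Hom-space is one-dimensional. Choose an orthonormal basis $e_{\lambda,j,1},\dots,e_{\lambda,j,h_\lambda}$ of each $V_{\lambda,j}$ with $e_{\lambda,j,1}$ proportional to $u_{\lambda,j}$, and transported compatibly by the $\varphi_{j\to j'}$. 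In this basis $Q=\bigoplus_\lambda (M_\lambda\otimes I_{h_\lambda})$ for some $M_\lambda\succeq 0$ of size $m_\lambda$. Consequently
$$
\sigma=\sum_\lambda\sum_{a,b}(M_\lambda)_{ab}\sum_{t=1}^{h_\lambda} e_{\lambda,a,t}\,e_{\lambda,b,t}.
$$

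\emph{Main obstacle.} The key identity to prove is
$$
\sum_t e_{\lambda,a,t}\,e_{\lambda,b,t}=\kappa_\lambda\,\mathcal S_G(u_{\lambda,a}u_{\lambda,b})
$$
for a positive constant $\kappa_\lambda$ independent of $a,b$. The left side is $G$-invariant, since it is the pairing of two equivalent orthogonal representations. The averaged polynomial $\mathcal S_G(u_{\lambda,a}u_{\lambda,b})$ can be expanded in the $e$'s and then computed with Schur orthogonality for matrix coefficients, $\sum_g \rho_{s1}(g)\rho_{t1}(g)=\frac{|G|}{h_\lambda}\delta_{st}$. This gives the identity with $\kappa_\lambda$ depending only on $h_\lambda$ and the normalization of the $u$'s. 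The compatibility of the $u_{\lambda,j}$ under the isomorphisms $\varphi_{j\to j'}$ is what makes the same orthogonal matrices $\rho(g)$ appear for every $j$, and therefore what makes $\kappa_\lambda$ independent of $a,b$. Setting $Q_\lambda=\kappa_\lambda D M_\lambda D$, with $D$ the diagonal matrix of norm rescalings, gives the required positive semidefinite matrices. Both the $S_n$ case and the $S_{n-1}$ case (with $x_n$ fixed) are covered by this argument, because it uses only finiteness of $G$, an orthogonal permutation action on monomials, and real irreducible representations.
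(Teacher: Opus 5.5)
The paper does not prove this statement. It imports it from Blekherman--Riener (Remark 4.3 and Corollary 4.4), and the argument behind that citation is the Gatermann--Parrilo route you take: average the Gram matrix, split it with Schur's lemma, and convert $\sum_t e_{\lambda,a,t}e_{\lambda,b,t}$ into orbit sums via Schur orthogonality. Your sufficiency direction is correct, and it is the only one the paper's exact verification needs. Your Schur-orthogonality computation is also correct. Strictly, with $u_{\lambda,a}=\alpha_a e_{\lambda,a,1}$ the identity is $\sum_t e_{\lambda,a,t}e_{\lambda,b,t}=\frac{h_\lambda}{|G|\alpha_a\alpha_b}\,\mathcal S_G(u_{\lambda,a}u_{\lambda,b})$. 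So $\kappa_\lambda$ is independent of $a,b$ only after your rescaling $D$, which your final formula already contains.

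\textbf{The gap.} The step that fails as written is ``let $V=\bigoplus V_{\lambda,j}$, chosen orthogonal''. The representative set is given. Since $V_{\lambda,j}$ is irreducible and contains $u_{\lambda,j}$, it equals $\operatorname{span}(G\cdot u_{\lambda,j})$. So the copies are forced, and Definition~\ref{def:reprset} does not make them orthogonal. The paper's own tableau vectors are not orthogonal in general. For $n=3$, $\lambda=(2,1)$ and exponent levels $0<1<2$, the two semistandard tableaux give
$u_1=x_1x_3^2+x_2x_3^2-x_1^2x_2-x_1^2x_3$ and $u_2=x_1^2x_3+x_2^2x_3-x_1x_2^2-x_1x_3^2$, with $\langle u_1,u_2\rangle=-2$.

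In that situation your $e_{\lambda,j,t}$ are not an orthonormal basis of $V$. The matrix of the operator $Q$ in that basis is then not the coefficient matrix of $\sigma$. So the block form of $Q$ no longer yields your expression for $\sigma$ with $M_\lambda\succeq 0$.

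\textbf{The repair.} By Schur's lemma, $\langle\varphi_{1\to a}v,\varphi_{1\to b}w\rangle=c_{ab}\langle v,w\rangle$ on $V_{\lambda,1}$, with $c_{ab}$ proportional to $\langle u_{\lambda,a},u_{\lambda,b}\rangle$. Hence Gram--Schmidt on $(u_{\lambda,1},\ldots,u_{\lambda,m_\lambda})$, given by an invertible $S$, yields a new representative set $U'_\lambda=U_\lambda S$ whose copies are mutually orthogonal. Bilinearity gives $X'_\lambda=S^{\mathsf T}X_\lambda S$. Your argument applied to $U'_\lambda$ gives $Q'_\lambda\succeq0$, and then $Q_\lambda:=SQ'_\lambda S^{\mathsf T}$ works for $U_\lambda$.

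Alternatively, skip the operator picture altogether. Expand the averaged Gram tensor $\frac1{|G|}\sum_{g,k}gf_k\otimes gf_k\in(V\otimes V)^G$ directly in the compatible basis $e_{\lambda,a,t}$. Absolute irreducibility forces the form $\sum_{a,b}(M_\lambda)_{ab}\sum_t e_{\lambda,a,t}\otimes e_{\lambda,b,t}$. Testing against $\ell\otimes\ell$ with $\ell(e_{\lambda,a,t})=c_a\delta_{t1}$ shows $M_\lambda\succeq0$, and no orthogonality is needed.
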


\subsection{Representative vectors for \texorpdfstring{$S_n$}{Sn} acting on \texorpdfstring{$\mathbb{R}[x_1,\ldots,x_n]_{\le d}$}{R[x1,...xn]leqd}}\label{subsec:repr-sn}

We now describe an explicit combinatorial construction of a representative set for the action of $S_n$ on
$$
V_d:=\mathbb{R}[x_1,\ldots,x_n]_{\le d}.
$$
We work in the standard monomial basis and identify $V_d\cong\mathbb{R}^{Z_d}$ by coefficient vectors, where $Z_d$ is the set of all monomials of total degree at most $d$.

Let $x^\alpha=x_1^{\alpha_1}\cdots x_n^{\alpha_n}$ be a monomial with $\sum_i \alpha_i\le d$. Let
$$
a_0<a_1<\cdots<a_s
$$
be the distinct exponent values appearing among $\alpha_1,\ldots,\alpha_n$, and set
$$
\eta_\ell:=|\{i\in[n]:\alpha_i=a_\ell\}|,\qquad \ell=0,\ldots,s.
$$
Then $\eta=(\eta_0,\ldots,\eta_s)$ is a composition of $n$. We call
$$
\xi=(a_0<\cdots<a_s;\eta), \qquad \eta=(\eta_0,\ldots,\eta_s),
$$
the exponent pattern of $x^\alpha$.

Two monomials lie in the same $S_n$-orbit if and only if they have the same exponent pattern. Let $\Xi_{n,d}$ be the set of exponent patterns arising from monomials of degree at most $d$; equivalently, $\Xi_{n,d}$ consists of all $\xi=(a_0<\cdots<a_s;\eta)$ for some~$s\geq 0$, $\eta=(\eta_0,\ldots,\eta_s)$, where $a_0,\ldots,a_s$ are nonnegative integers and $\eta_0,\ldots,\eta_s$ are positive integers satisfying $\sum_{\ell=0}^s\eta_\ell=n$, and $\sum_{\ell=0}^s \eta_\ell a_\ell\le d$. The set $\Xi_{n,d}$ corresponds bijectively to the $S_n$-orbits of monomials in $V_d$. 

Fix a partition $\lambda\vdash n$ and an exponent pattern $\xi=(a_0<\cdots<a_s;\eta)\in\Xi_{n,d}$. A \emph{semistandard Young tableau of shape $\lambda$ and content $\eta$} is a filling of the Young diagram of $\lambda$ with symbols from $\{0,1,\ldots,s\}$ such that symbol $\ell$ appears exactly $\eta_\ell$ times, each row is weakly increasing (from left to right), and each column is strictly increasing (from top to bottom).

Let $t_\lambda$ be the Young tableau of shape $\lambda$ filled row-by-row with $1,2,\ldots,n$. Let $C_{t_\lambda}$ be the subgroup of $S_n$ consisting of all permutations of the entries within the columns of $t_\lambda$. For each $c\in C_{t_\lambda}$, let $t_c=c\cdot t_\lambda$ be the numbered tableau obtained by applying $c$ to the entries of $t_\lambda$.

\paragraph{From \texorpdfstring{$(\lambda,T,\xi)$}{(lambda,T,xi)} to a polynomial.}

Fix $\xi=(a_0<\cdots<a_s;\eta)\in\Xi_{n,d}$, and a semistandard tableau $T$ of shape $\lambda$ and content $\eta$. The symbols $\{0,1,\ldots,s\}$ appearing in $T$ are identified with the exponent levels $a_0,\ldots,a_s$ in this fixed order.\footnote{The increasing order $a_0<\cdots<a_s$ makes the exponent pattern canonical. For the construction itself, what matters is a fixed identification between tableau symbols and exponent levels; a simultaneous relabelling of the symbols and exponent levels gives an equivalent representative set.}

Given a filling $S$ of shape $\lambda$ with symbols in $\{0,\ldots,s\}$ and a numbered tableau $t$ of shape $\lambda$, define
$$
    m_{S,\xi}(t):=\prod_{b\in\lambda} x_{t(b)}^{a_{S(b)}} ,
$$
where $b$ runs over the boxes of the Young diagram, $S(b)$ is the symbol in box $b$, and $t(b)$ is the number in box $b$.

Finally, we sum over all distinct fillings $T'$ obtained by permuting the entries within each row of $T$:
$$
u_{\lambda,T,\xi} := \sum_{T'\sim T}\sum_{c\in C_{t_\lambda}} \operatorname{sgn}(c)\,m_{T',\xi}(c\cdot t_\lambda).
$$
The polynomial $u_{\lambda,T,\xi}$ is a signed linear combination of monomials all having the same orbit data $\xi=(a_0<\cdots<a_s;\eta)$.

Collecting the vectors $u_{\lambda,T,\xi}$ over all $\xi=(a_0<\cdots<a_s;\eta)\in\Xi_{n,d}$ and over all semistandard tableaux $T$ of shape $\lambda$ and content $\eta$, yields the matrix $U_\lambda$. For fixed $\lambda$, the vectors $u_{\lambda,T,\xi}$ corresponding to different $(T,\xi)$ generate mutually $S_n$-isomorphic copies of the Specht module $S^\lambda$. The matrices  $\{U_\lambda\}_{\lambda\vdash n}$ together form a representative set for the action of $S_n$ on $\mathbb{R}[x_1,\ldots,x_n]_{\leq d}$ cf.\ Definition~\ref{def:reprset}, see e.g.,~\cite{sagan, riener}.

\begin{examp}
We illustrate the construction by reproducing the representative polynomials used in the block $Q_{(6,2)}$ of Table~\ref{table:certificate} below. Let $n=8$ and $\lambda=(6,2)$, with canonical tableau
$$
t_\lambda={\begin{ytableau}
1 & 2 & 3 & 4 & 5 & 6\\
7 & 8
\end{ytableau}}.
$$
The only columns of height two are the first two, so $C_{t_\lambda}=\langle (1\,7),(2\,8)\rangle$ has four elements with signs $+,-,-,+$. Consider the exponent levels $a_0=0,a_1=1$. For content $\eta^{(1)}=(6,2)$ there is a unique semistandard tableau
$$
T_1={\begin{ytableau}
0&0&0&0&0&0\\
1&1
\end{ytableau}}.
$$
With $\xi_1=(0<1;\eta^{(1)})=(0<1;(6,2))$, the construction gives 
$$
u_{(6,2),T_1,\xi_1}=x_7x_8-x_2x_7-x_1x_8+x_1x_2.
$$
For content $\eta^{(2)}=(5,3)$ one takes
$$
T_2={\begin{ytableau}
0&0&0&0&0&1\\
1&1
\end{ytableau}}.
$$
With $\xi_2=(0<1;\eta^{(2)})=(0<1;(5,3))$, the row-equivalence sum yields
$$
u_{(6,2),T_2,\xi_2} = (x_7x_8-x_2x_7-x_1x_8+x_1x_2)\sum_{i=3}^6 x_i.
$$
For content $\eta^{(3)}=(4,4)$ one takes
$$
T_3={\begin{ytableau}
0&0&0&0&1&1\\
1&1
\end{ytableau}}.
$$
With $\xi_3=(0<1;\eta^{(3)})=(0<1;(4,4))$, one obtains
$$
u_{(6,2),T_3,\xi_3} = (x_7x_8-x_2x_7-x_1x_8+x_1x_2)\sum_{3\le i<j\le 6} x_i x_j.
$$
These are exactly the three polynomials used for the block $Q_{(6,2)}$ in Table~\ref{table:certificate}. Further vectors in the $(6,2)$-block of $V_4$ arise from non-squarefree exponent patterns. They are produced by the same tableau construction and are present in the full symmetry-reduced SDP, but are not needed in our sparse certificate.
\end{examp}

\subsubsection{Representative vectors for \texorpdfstring{$S_{n-1}$}{Sn-1} acting on \texorpdfstring{$\mathbb{R}[x_1,\ldots,x_n]_{\leq d-1}$}{R[x1,...,xn]leq d-1}}
To reduce the multiplier block~$R$ matrix, we consider the group $S_{n-1}$, acting on $x_1,\ldots,x_{n-1}$ and fixing $x_n$. Thus 
$$
\mathbb R[x_1,\ldots,x_n]_{\le d-1} = \bigoplus_{q=0}^{d-1} x_n^q\,\mathbb R[x_1,\ldots,x_{n-1}]_{\le d-1-q}
$$
as an $S_{n-1}$-module. For each partition $\mu\vdash n-1$, the representative set $V_\mu$ is obtained by concatenating, over all $q=0,\ldots,d-1$, the representative vectors of type $\mu$ in $\mathbb R[x_1,\ldots,x_{n-1}]_{\le d-1-q}$, each multiplied by $x_n^q$.

\subsection{The symmetry-reduced SDP}

\begin{proposition}[Block-reduced hypercube SDP] \label{thm:block-reduced-sdp}
Let $p\in \mathbb R[x_1,\ldots,x_n]_{\le 2d}$ be $S_n$-invariant. Choose representative sets $U_\lambda=(u_{\lambda,1},\ldots,u_{\lambda,m_\lambda})$, $\lambda\vdash n$, for the action of $S_n$ on $\mathbb R[x_1,\ldots,x_n]_{\le d}$. Furthermore, choose representative sets $V_\mu=(v_{\mu,1},\ldots,v_{\mu,r_\mu})$, $\mu\vdash n-1$, for the action of $S_{n-1}$ on $\mathbb R[x_1,\ldots,x_n]_{\le d-1}$ fixing $x_n$. Define
$$
X_\lambda(x) :=\left(\mathcal S_{S_n}(u_{\lambda,a}u_{\lambda,b})\right)_{a,b=1}^{m_\lambda}, \quad \text{and} \quad  Y_\mu(x) :=\left(\mathcal S_{S_{n-1}}(v_{\mu,a}v_{\mu,b})\right)_{a,b=1}^{r_\mu}.
$$
Then $p\in M_{n,2d}(\mathbf g)$, where $g_i=x_i-x_i^2$, if and only if there exist positive semidefinite matrices
$$
Q_\lambda \succeq 0 \quad (\lambda\vdash n), \qquad R_\mu\succeq0\quad(\mu\vdash n-1),
$$
such that
\begin{align} \label{eq:coefficientmatchformula} p =\sum_{\lambda\vdash n} \left\langle Q_\lambda, X_\lambda(x)\right\rangle+
\sum_{i=1}^n\left( (i\,n)\cdot\left(\sum_{\mu\vdash n-1}\left\langle R_\mu,  Y_\mu(x)\right\rangle\right)\right)(x_i-x_i^2).
\end{align}
\end{proposition}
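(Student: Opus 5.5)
The plan is to combine Proposition~\ref{prop:invprop} with Theorem~\ref{thm:invariant-sos-block}, applied once for each of the two groups. The proof splits into two directions.

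\emph{Sufficiency.} Suppose positive semidefinite matrices $Q_\lambda$ and $R_\mu$ satisfy \eqref{eq:coefficientmatchformula}. Set $\widetilde\sigma:=\sum_\lambda\langle Q_\lambda,X_\lambda\rangle$ and $\widetilde\rho:=\sum_\mu\langle R_\mu,Y_\mu\rangle$.
\begin{itemize}
\item Each entry of $X_\lambda$ is an $S_n$-orbit sum $\mathcal S_{S_n}(\cdot)$, so $\widetilde\sigma$ is $S_n$-invariant.
\item Likewise, $\widetilde\rho$ is invariant under $S_{n-1}=\mathrm{Stab}_{S_n}(n)$.
\item The ``if'' direction of Theorem~\ref{thm:invariant-sos-block} then says that $\widetilde\sigma$ is a sum of squares of polynomials of degree at most $d$, and $\widetilde\rho$ one of polynomials of degree at most $d-1$. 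Hence $\deg\widetilde\sigma\le 2d$ and $\deg\widetilde\rho\le 2d-2$.
\end{itemize}
One could instead see this directly. Write $Q_\lambda=\sum_k w_kw_k\T$; then $\langle Q_\lambda,X_\lambda\rangle=\sum_{g}\sum_k (g\cdot\sum_a w_{k,a}u_{\lambda,a})^2$. The easy direction of Proposition~\ref{prop:invprop} now gives $p\in M_{n,2d}(\mathbf g)$.

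\emph{Necessity.} Suppose $p\in M_{n,2d}(\mathbf g)$.
\begin{itemize}
\item Proposition~\ref{prop:invprop} yields an $S_n$-invariant sum of squares $\widetilde\sigma$ with $\deg\widetilde\sigma\le 2d$ and a $\mathrm{Stab}_{S_n}(n)$-invariant sum of squares $\widetilde\rho$ with $\deg\widetilde\rho\le 2d-2$, satisfying \eqref{eq:decomphypercube}.
\item Since every polynomial in a sum-of-squares decomposition of a degree-$2e$ polynomial has degree at most $e$, $\widetilde\sigma$ is a sum of squares of elements of $\mathbb R[x]_{\le d}$. Likewise, $\widetilde\rho$ is a sum of squares of elements of $\mathbb R[x]_{\le d-1}$.
\item The ``only if'' direction of Theorem~\ref{thm:invariant-sos-block}, for $G=S_n$ and for $G=S_{n-1}$ respectively, produces $Q_\lambda\succeq0$ with $\widetilde\sigma=\sum_\lambda\langle Q_\lambda,X_\lambda\rangle$ and $R_\mu\succeq0$ with $\widetilde\rho=\sum_\mu\langle R_\mu,Y_\mu\rangle$.
\item Substituting these into \eqref{eq:decomphypercube} gives exactly \eqref{eq:coefficientmatchformula}.
\end{itemize}

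\emph{Checks and expected obstacle.} The main point to verify is that the cited theorem applies as stated in each case. Three things need confirming.
\begin{itemize}
\item The sets $U_\lambda$ and $V_\mu$ must be genuine representative sets in the sense of Definition~\ref{def:reprset}. For $V_\mu$, this follows because the decomposition $\bigoplus_q x_n^q\mathbb R[x_1,\ldots,x_{n-1}]_{\le d-1-q}$ is a direct sum of $S_{n-1}$-modules. Concatenating representative sets of the summands, each multiplied by the invariant factor $x_n^q$, is therefore again a representative set.
\item The group acting in the $\widetilde\rho$ part must be $\mathrm{Stab}_{S_n}(n)$, identified with $S_{n-1}$ permuting $x_1,\ldots,x_{n-1}$.
\item The unnormalized operator $\mathcal S_G$ only rescales each $Q_\lambda$ by $|G|$. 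This does not affect positive semidefiniteness, so the equivalence is unaffected.
\end{itemize}
Everything else is a direct substitution.
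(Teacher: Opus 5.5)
Your proposal is correct and matches the paper's proof: necessity uses Proposition~\ref{prop:invprop} followed by Theorem~\ref{thm:invariant-sos-block}, applied to $G=S_n$ on $\mathbb R[x_1,\ldots,x_n]_{\le d}$ and to $G=S_{n-1}$ on $\mathbb R[x_1,\ldots,x_n]_{\le d-1}$; sufficiency holds because positive semidefinite blocks give invariant sums of squares, to which the easy direction of Proposition~\ref{prop:invprop} applies. You also state explicitly that a sum-of-squares decomposition of a polynomial of degree at most $2e$ only involves polynomials of degree at most $e$, which is a step the paper leaves implicit.
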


\begin{proof}
By Proposition~\ref{prop:invprop}, every symmetric degree-$2d$
certificate may be written as $p=\sigma+\sum_{i=1}^n ((i\,n)\rho)(x_i-x_i^2)$, where $\sigma$ is an $S_n$-invariant sum of squares from $\mathbb R[x_1,\ldots,x_n]_{\le d}$, and $\rho$ is an $S_{n-1}$-invariant sum of squares of polynomials of degree at most $d-1$, with $S_{n-1}$ fixing $x_n$. Apply Theorem~\ref{thm:invariant-sos-block} to $\mathbb R[x_1,\ldots,x_n]_{\le d}$ with $G=S_n$. This gives
$$
\sigma=\sum_{\lambda\vdash n}\left\langle Q_\lambda, X_\lambda(x)\right\rangle,\qquad Q_\lambda\succeq0.
$$
Apply the same theorem to $\mathbb R[x_1,\ldots,x_n]_{\le d-1}$ with $G=S_{n-1}$, where $S_{n-1}$ acts
on $x_1,\ldots,x_{n-1}$ and fixes $x_n$. This gives
$$
\rho =\sum_{\mu\vdash n-1}\left\langle R_\mu, Y_\mu(x)\right\rangle, \qquad R_\mu\succeq0.
$$
Substituting these expressions for $\sigma$ and $\rho$ gives the stated block-reduced form. The converse is immediate: positive semidefinite blocks give sums of squares, and Proposition~\ref{prop:invprop} then gives a valid certificate in $M_{n,2d}(\mathbf g)$.
\end{proof}

To obtain the SDP, we expand Equation~\eqref{eq:coefficientmatchformula} in Proposition~\ref{thm:block-reduced-sdp} and equate coefficients. Since both sides are $S_n$-invariant, it suffices to impose one equation for each orbit of monomials under $S_n$. In the implementation we use the sorted exponent vector as the orbit representative.

\begin{remark}
The exact verification of the certificate only uses the easy direction of Proposition~\ref{thm:block-reduced-sdp}. Once rational matrices $Q_{\lambda}$ and~$R_{\mu}$ are given, it suffices to check that they are positive semidefinite and that \eqref{eq:coefficientmatchformula} holds coefficientwise. Positive semidefinite blocks give sums of squares directly. So the exact verification does not depend on Theorem~\ref{thm:invariant-sos-block}; that theorem is used to obtain the reduced SDP and find the certificate.
\end{remark}

 \subsection{A small example: extracting the \texorpdfstring{$n=4$}{n = 4} certificate from the SDP}
\label{subsec:n4-cert}

We explain how the preceding human-readable certificate for $n=4$ was obtained from the block-reduced SDP. This illustrates how the SDP output can be converted into an explicit sum-of-squares identity.

We solved the block-reduced SDP for $24x_1x_2x_3x_4+1\in M_{4,4}(\mathbf g)$. First, we restricted the multiplier block $R$ to its trivial $S_3$-component by imposing $R_{(2,1)}=0$. Equivalently, the polynomial $\rho$ is represented using only squares of $S_3$-invariant polynomials, where $S_3$ acts on $x_1,x_2,x_3$ and fixes $x_4$. Second, in the trivial $S_4$-block $Q_{(4)}$, the diagonal of the numerical solution, after multiplying by $24$, was close to $1,\ 2,\ 2,\ 4$. This suggested fixing the two middle diagonal entries by adding the constraints $(Q_{(4)})_{2,2}=(Q_{(4)})_{3,3}=\frac2{24}$. With these three additional linear constraints, the SDP rationalizes to the simple blocks described below.

For the trivial $S_4$-block, the representative set is $$U_{(4)}=\left(1,\ \sum_{i=1}^4 x_i,\ \sum_{i=1}^4 x_i^2,\ \sum_{1\le i<j\le 4}x_ix_j \right).$$ The recovered $Q_{(4)}$-block is $$Q_{(4)} =\frac1{24} \begin{pmatrix}
1&-1&-1&2\\
-1&2&0&-2\\
-1&0&2&-2\\
2&-2&-2&4
\end{pmatrix}.
$$
Since the entries of $U_{(4)}$ are already $S_4$-invariant, the unnormalized orbit-sum contributes a factor $24$. Thus this block contributes $U_{(4)}^{\mathsf T} 24 Q_{(4)} U_{(4)}$,  which factors using a Cholesky decomposition as
$$
\left(1-\sum_{i=1}^4 x_i-\sum_{i=1}^4x_i^2 +2\sum_{1\le i<j\le4}x_ix_j \right)^2
+
\left(\sum_{i=1}^4x_i^2-\sum_{i=1}^4x_i \right)^2.
$$
The nontrivial $S_4$-block used is the $(3,1)$-block. With basis $\{u_1=x_4-x_1, \, u_2=x_4^2-x_1^2, u_3=x_3x_4+x_2x_4-x_1x_3-x_1x_2 \}$, the recovered block is
$$
Q_{(3,1)} = \frac12
\begin{pmatrix}
1&-1&0\\
-1&1&0\\
0&0&0
\end{pmatrix},
\quad
Q_{(2,2)}=0.
$$
Hence only $u_1-u_2$ is used. This contribution is
$$
\frac12\mathcal S_{S_4}\bigl((u_1-u_2)^2\bigr) = 2\sum_{1\le i<j\le4} \left(x_i^2-x_j^2-x_i+x_j\right)^2.
$$
It remains to describe the polynomial $\rho$. Let $s=x_1+x_2+x_3$, and $U_{(3)}=(1,s,x_4)$. 
The recovered trivial $S_3$-block is
$$
R_{(3)} =\begin{pmatrix} \frac13&-\frac16&-\frac13\\
-\frac16&\frac13&-\frac13\\
-\frac13&-\frac13&\frac43
\end{pmatrix}.
$$
Again, the basis elements are already $S_3$-invariant, so the unnormalized orbit-sum contributes a factor $|S_3|=6$. Thus the actual multiplier is $\rho = U_{(3)}^{\mathsf T} 6 R_{(3)}U_{(3)}$.
This factors as 
$$ \rho = 2\left(1-x_4-\frac12(x_1+x_2+x_3)\right)^2 + 6\left(x_4-\frac12(x_1+x_2+x_3)\right)^2.
$$
Substituting the $Q_{(4)}$- and $Q_{(3,1)}$-contributions into $\sigma_0$, the certificate becomes
$$ 24x_1x_2x_3x_4+1 = \sigma_0 +\sum_{i=1}^4 ((i\,4)\rho)(x_i-x_i^2),$$
which is exactly the certificate displayed above. Thus the readable  identity is obtained from the SDP by restricting $\rho$ to squares of $S_3$-invariant polynomials, imposing two rational diagonal constraints suggested by the numerical solution, and then factoring the resulting low-rank rational blocks using a Cholesky decomposition. The decomposition of $24x_1x_2x_3x_4+1$ is not unique. For instance, if one restricts to the trivial $R_{(3)}$- and $Q_{(4)}$-blocks, another rational certificate can be obtained after fixing a single diagonal entry. We do not display this alternative, because the certificate above has a more transparent form.

\section{Numerical results}\label{sec:numresults}

Let $C_{n,2d}$ be the minimum constant such that $x_1 \ldots x_n + C_{n,2d}$ is contained in the truncated quadratic module $M_{n,2d}(\mathbf{g})$. So $C_{n,n} = C_n$. The following numerical values for $C_{n,2d}$ are found, using the high precision solver \texttt{SDPA-GMP}~\cite{nakata}:

\begin{table}[H]\small
\centering
    \begin{tabular}{| r | r|| p{2.6cm} |p{2.6cm}|}
    \hline
   $n$ & $2d$ & numerical SDP value of $C_{n,2d}$ & suggested value from numerics\\\hline 
       2 & 2   &   0.125000000   & $1/8$\\          
       4 & 4   &   0.041666667 & $1/24 $ \\   
       6 & 6   &   0.020833333 & $1/48$ \\           
       8 & 8   &   0.010717341 &  \\   
       10 & 10 &   0.005566474 & \\ 
       12 & 12 &   0.002817734 & \\\hline 
       2 & 4   &   0.002809325 & \\          
       2&  6   &   0.000253534 & \\   
       2 & 8   &   0.000049163 & \\  
       2 & 10  &   0.000017179 &\\ 
       4 & 6   &   0.004464286 & 1/224\\
       4 & 8   &   0.000535555 & \\
       4 & 10 &    0.000111167 & \\
       6 & 8   &   0.002777778 & 1/360\\ 
       6 & 10  &   0.000310382 & \\   
       8 & 10 &    0.001655191 & \\   
    \hline 
    \end{tabular}
    \caption{\small Numerical SDP values for $C_{n,2d}$, rounded to nine decimal places. Here~$C_{n,2d}$ is the smallest constant such that $x_1 \ldots x_n + C_{n,2d}$ is contained in the truncated quadratic module $M_{n,2d}(\mathbf{g})$.} \label{table:SDPresults}
\end{table}
The values in Table~\ref{table:SDPresults} are numerical SDP optima and are not used as proof of the main theorem.

\begin{figure}[H]
\centering
\pgfplotsset{xmin=2, xmax=12, ymin=0, ymax=0.13}   
\begin{tikzpicture}
\begin{axis}[domain=2:12, xtick={2,4,6,8,10,12}, restrict y to domain=0:1,
width={0.4\textwidth}, height={0.5\textwidth},
axis x line*=bottom,axis y line*=left,
    /pgf/number format/.cd,tick label style={font=\footnotesize},
    ylabel = {\footnotesize value},
ylabel style={yshift=0cm,xshift=0cm},
xlabel={$n \rightarrow$},
xlabel style={yshift=0cm,xshift=0cm,font=\footnotesize},   
/pgfplots/scaled ticks=false,
      1000 sep={},
precision=6,
ylabel style={yshift=0cm},
 every axis plot/.append style={thick}]
  \addplot+ [mark=*,    mark options={
        fill=donkergroen2,
        draw=donkergroen2
    },donkergroen2, line width={1}] table {
2 0.125000000
4 0.041666667
6 0.020833333
8 0.012500000
10 0.008333333
12 0.005952381
};
  \addplot+ [mark=*,donkerblauw,     mark options={
        fill=donkerblauw,
        draw=donkerblauw
    }, line width={1}] table {
2 0.125000000
4 0.041666667
6 0.020833333
8 0.010717341
10 0.005566474
12 0.002817734
};
\end{axis}
\end{tikzpicture}
\pgfplotsset{xmin=2, xmax=12, ymin=0, ymax=1.1}   
\begin{tikzpicture}
\begin{axis}[domain=2:12, xtick={2,4,6,8,10,12}, restrict y to domain=0:1,
width={0.4\textwidth}, height={0.5\textwidth},
axis x line*=bottom,axis y line*=left,
    /pgf/number format/.cd,tick label style={font=\footnotesize},
    ylabel = {\footnotesize $C_{n,n}/\tfrac{1}{n(n+2)}$},
ylabel style={yshift=0cm,xshift=0cm},
xlabel={$n \rightarrow$},
xlabel style={yshift=0cm,xshift=0cm,font=\footnotesize},   
/pgfplots/scaled ticks=false,
      1000 sep={},
precision=6,
ylabel style={yshift=0cm},
 every axis plot/.append style={thick}]
  \addplot+ [mark=*,    mark options={
        fill=firebrick,
        draw=firebrick,
    },firebrick, line width={1}] table {
2 1.000
4 1.000
6 1.000
8 0.8574
10 0.6680
12 0.4735
};
\end{axis}
\end{tikzpicture}
\caption{Comparison between the conjectured values $1/(n(n+2))$ (green) and the numerical SDP values obtained for $C_{n,n}$ (blue). The fraction $C_{n,n}/\tfrac{1}{n(n+2)}$ (red) is shown in the plot on the right.}
\end{figure}

\begin{remark}
To compute the larger numerical instances in Table~\ref{table:SDPresults}, we
used the following column-stabilizer simplification. For a representative vector
$u_a=u_{\lambda,T,\xi}$, define the row-symmetrized polynomial~$ f_a:=\sum_{T'\sim T} m_{T',\xi}(t_\lambda)$, so that $u_a=\sum_{c\in C_{t_\lambda}}\operatorname{sgn}(c)\,c\cdot f_a$. Hence, for two representatives $u_a,u_b$ in the same $\lambda$-block,
$$
\mathcal S_{S_n}(u_a u_b)=|C_{t_\lambda}|\,\mathcal S_{S_n}\left( f_a\sum_{e\in C_{t_\lambda}}\operatorname{sgn}(e)\,e\cdot f_b\right).
$$
To see this, after expanding the left-hand side as a sum over $c,d\in C_{t_\lambda}$, one sets $e=c^{-1}d$. For fixed $c$, right multiplication $\pi\mapsto \pi c$ is a bijection of $S_n$, so the remaining sum over $c$ contributes the factor $|C_{t_\lambda}|$. The same simplification is used for the $S_{n-1}$-blocks $Y_\mu$. 
\end{remark}

\subsection{The sparse certificate}
After symmetry reduction, the $n=8$ example can be heuristically further reduced. The sparsification step is used to identify a small support. Once this support is fixed, the final certificate is obtained and verified independently over $\mathbb Q$. 

In our numerical experiments, removing any one of the blocks $Q_{(8)}$, $Q_{(5,3)}$, $Q_{(6,2)}$, or $R_{(7)}$ made the SDP either infeasible or gave an objective value $> 0.0125$. Here the block indices are partitions of $8$ for the matrix $Q$, and of $7$ for the matrix $R$, hence correspond to irreducible representations of $S_8$ respectively $S_7$. If we set all other blocks to zero but keep only these blocks free, the objective value $0.0107\ldots$ was the same as the value from the full SDP (up to 20 decimals calculated with \texttt{SDPA-GMP}). So, the SDP could be restricted to those four blocks of sizes~$12+3+8+14$. 

To find a further reduction (heuristically) we set diagonal elements of these respective blocks to zero and checked whether the objective remains the same. Note that the blocks~$Q_{(8)}$ and~$R_{(7)}$ correspond to the~$S_8$-, respectively~$S_7$-invariant polynomials. So in order to find a certificate in which most of the polynomials which are squared are themselves symmetric, we first tried to set as many diagonal elements of $Q_{(6,2)}$ and  $Q_{(5,3)}$ to zero. We do this by checking all possible subsets of the 8+3 rows of the two matrices combined. It is found that we can set~$6$ of the diagonal elements to zero, without changing the objective value. We set these diagonal elements to zero, and then try to set as many diagonal elements in~$Q_{(8)}$ and~$R_{(7)}$ to zero without changing the objective value. This leads us to a relatively small set of representative basis elements used in the final certificate. The used basis elements (corresponding to nonzero diagonal elements hence nonzero rows/columns) are given in Table~\ref{table:certificate}.

\begin{table}[H]
\centering
\begin{tabularx}{\textwidth}{|l|X|}
\hline
{Matrix block} & {Representative basis polynomials used in the certificate} \\
\hline
$Q_{(8)}$ &
$1$, $\sum_{i=1}^8 x_i$, 
  $\sum_{1 \leq i < j \leq 8} x_i x_j$, 
$\sum_{1 \leq i < j \leq 8} (x_i^2 x_j + x_i x_j^2)$, $\sum_{1 \leq i < j < k \leq 8} x_i x_j x_k$,
$\sum_{i=1}^8 x_i^4$, $\sum_{1 \leq i < j \leq 8} (x_i^3 x_j + x_i x_j^3)$, $\sum_{1 \leq i < j \leq 8} x_i^2 x_j^2$,
$\sum_{1 \leq i < j < k \leq 8} (x_i^2 x_j x_k + x_i x_j^2 x_k + x_i x_j x_k^2)$,
$\sum_{1 \leq i < j < k < \ell \leq 8} x_i x_j x_k x_\ell$ \\
\hline
$Q_{(6,2)}$ & $x_7 x_8 - x_2 x_7 - x_1 x_8 + x_1 x_2$,
$(x_7 x_8 - x_2 x_7 - x_1 x_8 + x_1 x_2) \sum_{i=3}^6 x_i$,
$(x_7 x_8 - x_2 x_7 - x_1 x_8 + x_1 x_2) \sum_{3 \leq i < j \leq 6} x_i x_j$ \\
\hline
$Q_{(5,3)}$ &$x_6 x_7 x_8 - x_3 x_6 x_7 - x_2 x_6 x_8 + x_2 x_3 x_6 - x_1 x_7 x_8 + x_1 x_3 x_7 + x_1 x_2 x_8 - x_1 x_2 x_3$,
$(x_6 x_7 x_8 - x_3 x_6 x_7 - x_2 x_6 x_8 + x_2 x_3 x_6 - x_1 x_7 x_8 + x_1 x_3 x_7 + x_1 x_2 x_8 - x_1 x_2 x_3)\sum_{i=4}^5 x_i$ \\
\hline
$R_{(7)}$ & $1$, $\sum_{i=1}^7 x_i$, $\sum_{i=1}^7 x_i^2$, $\sum_{1 \leq i < j \leq 7} x_i x_j$, $\sum_{i=1}^7 x_i^3$,
$\sum_{1 \leq i < j \leq 7} (x_i^2 x_j + x_i x_j^2)$, $\sum_{1 \leq i < j < k \leq 7} x_i x_j x_k$,
$x_8$, 
$x_8 \sum_{i=1}^7 x_i^2$, $x_8 \sum_{1 \leq i < j \leq 7} x_i x_j$,
$x_8^2$, $x_8^2 \sum_{i=1}^7 x_i$, $x_8^3$ \\
\hline
\end{tabularx}
\caption{Representative basis polynomials used per matrix in the certificate (unused basis elements are omitted). } \label{table:certificate}
\end{table}

\section{Exact rationalization}\label{sec:rational}
To round our numerical certificate to a rational certificate, we solved the symmetry-reduced SDP numerically using only the blocks $Q_{(6,2)}$, $Q_{(8)}$, $Q_{(5,3)}$ for the matrix~$Q$, together with the block $R_{(7)}$ for $R$. Furthermore, we imposed the constraints that the diagonal elements corresponding to unused basis elements are set to zero, so that only the basis elements in Table~\ref{table:certificate} are used. To obtain a suitable point for rationalization, we fixed $ C=\tfrac{11}{1000}$ and maximized $t$ subject to the coefficient-matching constraints and the lower-bound constraints
$$
Q_{(6,2)}^{\mathrm{act}}\succeq tI,\qquad Q_{(8)}^{\mathrm{act}}\succeq tI,\qquad Q_{(5,3)}^{\mathrm{act}}\succeq tI,\qquad R_{(7)}^{\mathrm{act}}\succeq tI.
$$
Here the superscript $\mathrm{act}$ denotes the active principal submatrix obtained after deleting the rows and columns forced to be zero. Numerically, this produced a feasible point with $t \approx 10^{-12}$. This numerical solution was used as a starting point for rationalization. The final feasibility and positive semidefiniteness checks below are exact. Let $z_0$ be a vector containing all Gram-matrix entries of the high-precision numerical solution. Exact coefficient matching of monomials gives a rational affine system
$$
Az=b,\qquad A,b\in\mathbb Q.
$$
We rounded $z_0$ to a rational vector $z_{\mathrm{round}}$ and computed the exact rational residual
$$
r=b-Az_{\mathrm{round}}.
$$
We verified by exact Gaussian elimination over $\mathbb Q$ that $A$ has full row rank,\footnote{In our case~$A \in \Q^{67 \times 155}$ in the final system, of rank~$67$. The blocks have sizes $10,3,2,13$, respectively, so the system has $10\cdot 11/2+3\cdot4/2+2\cdot3/2+13\cdot14/2=155$ variables.} so $AA^{\mathsf T}$ is invertible. The corrected vector
$$
z=z_{\mathrm{round}}+A^{\mathsf T} (AA^{\mathsf T})^{-1} r
$$ 
then satisfies $Az=b$ exactly over $\mathbb Q$. We reconstructed the rational Gram matrices from $z$, and hence constructed an exact rational certificate. Positive semidefiniteness was verified exactly by checking all principal minors of the rational Gram matrices. The determinants were computed over~$\mathbb Q$:  all principal minors were nonnegative. We also verified by exact coefficient matching that the associated polynomial is precisely $x_1x_2\cdots x_8+\frac{11}{1000}$. The ancillary files contain:
\begin{enumerate}
\item rational Gram matrices for the four nonzero blocks;
\item a Julia script verifying exact coefficient matching over $\mathbb Q$;
\item a Julia script verifying positive semidefiniteness over $\mathbb Q$.
\end{enumerate}
This proves Theorem~\ref{thm:conjecturecounterexample}. The exact rational certificate and verification scripts are available at \href{https://doi.org/10.5281/zenodo.20430334}{\texttt{doi:10.5281/zenodo.20430334}} and are included as ancillary files with the arXiv submission.\footnote{Additional files to compute the numerical SDP values are available in the GitHub repository \url{https://github.com/svenpolak/HypercubeSOS/}.}

\section{Further remarks: a Boolean quotient relaxation}\label{sec:ideal}

The computations suggest that, at the diagonal order $2d=n$, the quadratic-module SDP may, for this family of polynomials $x_1\cdots x_n$, behave like the corresponding Boolean quotient relaxation. The observations here are not used in the proof of Theorem~\ref{thm:conjecturecounterexample}.

\begin{remark}
In the numerical SDP solutions for certificates of $x_1\cdots x_n+C_n\in M_{n,n}(\mathbf g)$, for $n=2,4,6,8,10,12$, we observed a consistent pattern. The part corresponding to $\rho$ could be restricted to the trivial $S_{n-1}$-block $R_{(n-1)}$, and the $S_n$-invariant sum-of-squares part could be restricted to the two-row blocks
$$
Q_{(n)},\ Q_{(n-1,1)},\ Q_{(n-2,2)},\ldots,Q_{(n/2,n/2)}.
$$
All other blocks could be set to zero without changing the numerical optimum. We do not have a proof of this block restriction for the quadratic-module SDP.
\end{remark}

This observation suggests comparing the quadratic-module SDP with a related
Boolean quotient relaxation, where the appearance of only one- or two-row blocks is automatic. Consider the Boolean ideal $I_n=\langle x_1^2-x_1,\ldots,x_n^2-x_n\rangle$. Let $C_{n,2d}^{\mathrm{bool}}$ be the smallest constant $C$ such that $x_1\cdots x_n+C\equiv \sum_j q_j^2 \pmod{I_n},\,\,\deg q_j\le d$. The Boolean quotient relaxation is equivalent to the following preordering relaxation. Write $g_S:=\prod_{i\in S}g_i$, where $g_i=x_i-x_i^2$. The degree-$2d$ truncated preordering $\mathcal T_{n,2d}(\mathbf g)$ generated by the $g_i$'s consists of all polynomials of the form $\sum_{S\subseteq[n]}\sigma_S g_S$, where the $\sigma_S$'s are sums of squares and $\deg(\sigma_Sg_S)\le 2d$. Let $C_{n,2d}^{\mathrm{pre}}$ be the corresponding smallest constant $C$ such that $x_1\cdots x_n+C\in \mathcal T_{n,2d}(\mathbf g).$

\begin{lemma}
For all $n$ and $d$,
$$
C_{n,2d}^{\mathrm{bool}} = C_{n,2d}^{\mathrm{pre}} \le C_{n,2d}.
$$
\end{lemma}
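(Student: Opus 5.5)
The lemma bundles two claims, $C_{n,2d}^{\mathrm{pre}}\le C_{n,2d}$ and $C_{n,2d}^{\mathrm{bool}}=C_{n,2d}^{\mathrm{pre}}$. I would prove them in that order. The proof will carry the degree constraint along by an explicit multilinear reduction, and that reduction is also the main obstacle.

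\textbf{The inequality.} This is immediate from inclusion. The truncated quadratic module $M_{n,2d}(\mathbf g)$ is contained in $\mathcal T_{n,2d}(\mathbf g)$: take $\sigma_\emptyset=\sigma_0$ and $\sigma_{\{i\}}=\sigma_i$, and set all other $\sigma_S=0$. The degree conditions $\deg\sigma_0\le 2d$ and $\deg(\sigma_i g_i)\le 2d$ agree in both definitions. So any feasible $C$ for the quadratic module is feasible for the preordering, which gives $C^{\mathrm{pre}}_{n,2d}\le C_{n,2d}$.

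\textbf{The inequality $C^{\mathrm{bool}}\le C^{\mathrm{pre}}$.} Suppose $x_1\cdots x_n+C=\sum_S \sigma_S g_S$ with $\deg(\sigma_S g_S)\le 2d$. Each $g_i=x_i-x_i^2$ lies in $I_n$, so every term with $S\neq\emptyset$ vanishes modulo $I_n$. Hence $x_1\cdots x_n+C\equiv\sigma_\emptyset\pmod{I_n}$, and $\sigma_\emptyset=\sum_j q_j^2$ with $\deg q_j\le d$, since $\sigma_\emptyset$ has degree at most $2d$. Thus $C$ is Boolean-feasible.

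\textbf{The inequality $C^{\mathrm{pre}}\le C^{\mathrm{bool}}$.} Suppose $x_1\cdots x_n+C=\sum_j q_j^2+h$ with $h\in I_n$ and $\deg q_j\le d$. We may assume each $q_j$ is multilinear, since reducing modulo $I_n$ replaces each $x_i^k$ by $x_i$ and does not raise degree. Then $h$ has degree at most $\max(2d,n)$. The task is to rewrite $h$ as an element of $\mathcal T_{n,2d}(\mathbf g)$ with the degree bounds. Note that $h$ need not be a combination of the $g_i$ with SOS coefficients, so the identity must come from explicit preordering certificates.

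I would first reduce to the setting where the multilinear reduction of $\sum_j q_j^2$ equals $x_1\cdots x_n+C$, which forces $n\le 2d$ in the relevant case. Then I would expand each square $q_j^2=\sum_{\alpha,\beta}c_\alpha c_\beta x^\alpha x^\beta$. For a squarefree monomial $m$ with $\deg m\le d$ and $i\mid m$, the difference between $m^2$-type terms and their multilinear reductions can be written using the identity
$$x_i^2 m'^2 - x_i m'^2 = -g_i\, m'^2.$$
Here the minus sign is the problem, so a naive substitution does not work.

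Instead I would use the standard Boolean-to-preordering device: write the Lagrange/indicator decomposition
$$1=\prod_{i=1}^n\bigl(x_i+(1-x_i)\bigr)=\sum_{S}\prod_{i\in S}x_i\prod_{i\notin S}(1-x_i).$$
Each term is a square plus preordering terms: for example $x_i=x_i^2+g_i$ and $1-x_i=(1-x_i)^2+g_i$, and expanding products yields $g_T$ times products of squares. Using this, $f\in\mathcal T_{n,2d}(\mathbf g)$ whenever $f$ is nonnegative on $\{0,1\}^n$ and equals $\sum q_j^2$ modulo $I_n$. The key step is to show that each $q_j^2-\overline{q_j^2}$ (bar denoting multilinear reduction) lies in the span of $\sigma_S g_S$ with correct degrees, via $x_i^2=x_i-g_i$ applied to $q_j^2$ written as a square of a multilinear polynomial.

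I expect this degree and sign bookkeeping in the direction $C^{\mathrm{pre}}\le C^{\mathrm{bool}}$ to be the hardest part. Concretely, the task is to check that the multilinear reduction $\sum_j\overline{q_j^2}$ of the square part can be lifted to the preordering with $\deg(\sigma_Sg_S)\le 2d$. My first attempt would use the substitution $x_i\mapsto x_i^2+g_i$ inside the square $q_j^2$, tracking the degrees term by term.
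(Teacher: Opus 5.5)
Your arguments for $C^{\mathrm{pre}}_{n,2d}\le C_{n,2d}$ (inclusion of the quadratic module in the preordering) and for $C^{\mathrm{bool}}_{n,2d}\le C^{\mathrm{pre}}_{n,2d}$ (all terms with $S\neq\emptyset$ vanish modulo $I_n$) are correct and match the paper's. The gap is in the remaining direction $C^{\mathrm{pre}}_{n,2d}\le C^{\mathrm{bool}}_{n,2d}$. There you give a plan rather than a proof, and the tool you propose does not respect the truncation.

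The indicator decomposition $f=\sum_{v\in\{0,1\}^n}f(v)\prod_{i:v_i=1}(x_i^2+g_i)\prod_{i:v_i=0}\bigl((1-x_i)^2+g_i\bigr)$ only uses nonnegativity of $f$ on $\{0,1\}^n$. Every term $g_T\cdot(\text{square})$ it produces has degree $2|T|+2(n-|T|)=2n$. At the order of interest, $2d=n$, this lands in $\mathcal T_{n,2n}(\mathbf g)$, not $\mathcal T_{n,2d}(\mathbf g)$. So your claim ``$f\in\mathcal T_{n,2d}(\mathbf g)$ whenever \dots'' is not justified by that device.

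You have also misdiagnosed the obstacle. The minus sign in $x_i^2m'^2-x_im'^2=-g_im'^2$ is harmless: it says $x_im'^2=x_i^2m'^2+g_im'^2$, so multilinear reduction \emph{adds} a $g_i$-times-square term. The real difficulty is the cross terms $c_Ac_Bx_Ax_B$ with $A\neq B$ in $q^2$. These have no sign, so a term-by-term substitution $x_i\mapsto x_i^2+g_i$ yields $g_T$'s with indefinite coefficients. Showing that $q_j^2-\mathrm{red}(q_j^2)$ ``lies in the span'' of the $\sigma_Sg_S$ is not enough; you need membership in the cone with SOS $\sigma_S$.

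The missing idea is an exact identity. For multilinear $q=\sum_{|A|\le d}c_Ax_A$, set $\partial_Sq:=\sum_{A\supseteq S}c_Ax_{A\setminus S}$. Then
$$\mathrm{red}(q^2)=\sum_{S\subseteq[n]}g_S\,(\partial_Sq)^2 .$$
By bilinearity this reduces to the monomial case
$$x_{A\cup B}=x_{A\triangle B}\prod_{i\in A\cap B}(x_i^2+g_i)=\sum_{S\subseteq A\cap B}g_S\,x_{A\setminus S}x_{B\setminus S}.$$
In other words, you substitute $x_i=x_i^2+g_i$ only for $i\in A\cap B$ and then regroup by $S$. This regrouping turns the indefinite cross terms into the squares $(\partial_Sq)^2$. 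Each term has degree at most $2|S|+2(d-|S|)=2d$.

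This also removes the need to handle $h$ separately. Both $x_1\cdots x_n+C$ and $\sum_j\mathrm{red}(q_j^2)$ are multilinear and congruent modulo $I_n$, so by uniqueness of the multilinear normal form they are equal as polynomials. Hence
$$x_1\cdots x_n+C=\sum_S g_S\sum_j(\partial_Sq_j)^2\in\mathcal T_{n,2d}(\mathbf g).$$
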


\begin{proof}
The inequality follows because the quadratic module is contained in the preordering. For the equality, first reduce a preordering certificate modulo $I_n$. All terms with $S\neq\emptyset$ vanish, since $g_i\equiv0\pmod{I_n}$, which gives a Boolean sum-of-squares certificate.

Conversely, every Boolean square lifts to the preordering. We may assume that $q$ is squarefree. Then
\begin{align}\label{qsquaredI}
\mathrm{red}(q^2)=\sum_{S\subseteq[n]} g_S(\partial_S q)^2,
\end{align}
where $\mathrm{red}$ denotes the reduction modulo~$I_n$ and where $\partial_S x_A=x_{A\setminus S}$ if $S\subseteq A$, and $\partial_S x_A=0$ otherwise. Equation~\eqref{qsquaredI} follows by bilinearity from the monomial case: for monomials $x_A,x_B$, we have $x_{A\cup B} = \sum_{S\subseteq A\cap B}g_S x_{A\setminus S}x_{B\setminus S}$. Moreover, if $\deg q\le d$, then each term $g_S(\partial_Sq)^2$ has degree at most $2d$. Hence every Boolean sum-of-squares certificate gives a degree-$2d$ preordering certificate. 
\end{proof}

The Boolean quotient SDP is significantly smaller after symmetry reduction than the original quadratic module SDP. Since all monomials are squarefree modulo $I_n$, only the exponent patterns $\xi=(0<1;(n-k,k))$ occur in the representative-vector construction of Section~\ref{subsec:repr-sn}. Hence only one- or two-row partitions appear. The dual can be averaged over $S_n$ and  written in the simple form
$$
C_{n,2d}^{\mathrm{bool}}= \max_{y_0,\ldots,y_n\in\mathbb R}\left\{-y_n:\ y_0=1,\ \left(y_{|A\cup B|}\right)_{\substack{A,B\subseteq[n]\\|A|\le d,\ |B|\le d}}\succeq 0\right\}.
$$
For smaller~$n$, we computed the value directly from this $n+1$-variable moment matrix formulation. For larger~$n$, we used the $S_n$-block diagonalization of the moment matrix.  Numerically, for $n=2,4,6,8,10,12$, the Boolean/preordering value $C_{n,n}^{\mathrm{bool}}$ agrees with the quadratic-module value $C_n=C_{n,n}$.\footnote{Using symmetry, the Boolean SDP could be solved numerically for significantly larger~$n$.} Hence the question:

\begin{problem}
Is it true that, for every even $n$, one has
$$
C_{n,n}^{\mathrm{bool}}=C_{n}?
$$
Equivalently, for the polynomial $x_1\cdots x_n+C$, do the degree-$n$
preordering threshold and the degree-$n$ quadratic-module threshold on the
hypercube coincide?
\end{problem}
A positive answer would mean that the De Klerk--Laurent correction constant $C_{n}$ is determined by the Boolean quotient. This would be a special feature of the symmetric monomial $x_1\cdots x_n$, not a general equality between the truncated preordering and the truncated quadratic module.
\section*{Acknowledgements}
The author is grateful to Monique Laurent for sharing the conjecture, for suggesting the use of symmetry reduction, and for valuable discussions. The author also thanks Etienne de Klerk for helpful comments on the motivation of the conjecture.

\selectlanguage{english} 

\end{document}